\DeclareMathAlphabet{\mathsfit}{T1}{\sfdefault}{\mddefault}{\sldefault}
\SetMathAlphabet{\mathsfit}{bold}{T1}{\sfdefault}{\bfdefault}{\sldefault}
\theoremstyle{definition}
\theoremstyle{plain}
\newtheorem{defn}{Definition}[section]
\newtheorem{theorem}[defn]{Theorem}
\newtheorem{proposition}[defn]{Proposition}
\newtheorem{lemma}[defn]{Lemma}
\theoremstyle{remark}
\newtheorem{remark}[defn]{Remark}
\newcommand{\Qmat}{\mathsfit{Q}}
\newcommand{\Q}{{\mathbb Q}}
\newcommand{\R}{{\mathbb R}}
\newcommand{\MM}{{\mathcal{M}}}
\newcommand{\bx}{{\bm{x}}}
\newcommand{\by}{{\bm{y}}}
\newcommand{\bxy}{{\bm{xy}}}
\newcommand{\bt}{{\bm{t}}}
\author{Santiago Laplagne}
\email{slaplagn@dm.uba.ar}
\address{Instituto de C\'alculo, FCEN, Universidad de Buenos Aires - Ciudad Universitaria, Pabell\'on I - (C1428EGA) - Buenos Aires, Argentina}
\title[SOS decomposition of positive polynomials]{Sum of squares decomposition of positive polynomials with rational coefficients}
\begin{document}

\begin{abstract}
We present an example of a strictly positive polynomial with rational coefficients that can be decomposed as a sum of squares of polynomials over $\R$ but not over $\Q$. This answers an open question by C. Scheiderer posed as the second question in \cite[Section 5.1]{scheiderer}. We verify that the example we construct defines a nonsingular projective hypersurface, giving also a positive answer to the third question posed in that section.
\end{abstract}

\maketitle

\section{Introduction}
\label{section:introduction}
Given a polynomial $f \in \R[x_1, \dots, x_n]$, deciding if it is always positive or non-negative is a basic problem in real algebra, with important applications in optimization and other areas of mathematics. This is in general a difficult problem, and a usual relaxation is the problem of deciding if a polynomial can be decomposed as a sum of squares of polynomials, which provides a non-negativity certificate. This problem can be tackled using semidefinite programming, for which many numerical algorithms and fast implementations exist. B. Sturmfels asked the question if every polynomial with rational coefficients that is a sum of squares of polynomials allows always a decomposition with rational coefficients. This question is relevant in the problem of computing exact non-negativity certificates.

In \cite{scheiderer}, C. Scheiderer provides the first examples of polynomials with rational coefficients that can be decomposed as a sum of squares with real coefficients ($\R$-sos) but cannot be decomposed as a sum of squares of polynomials with rational coefficients ($\Q$-sos). The polynomials obtained in his construction always have common real roots, and the existence of these common roots is central to prove that there is no rational decomposition. A different class of examples have been constructed in \cite{laplagne} and \cite{CLS}. The strategy used there also produces polynomials with real roots. A natural question, posed by C. Scheiderer as part of \cite[Open Question 5.1]{scheiderer}, is whether there exist a polynomial over $\Q$ that can be decomposed as a sum of squares of polynomials over $\R$ but not over $\Q$ that is strictly positive, that is, it has no nontrivial real zeros. This problem is also discussed in \cite[Open Question 1.10]{vinzant}, where the authors comment on the difficulties related to this problem. In Section \ref{section:main} we present an example that gives a positive answer to this question.

A sharper question posed also by C. Scheiderer is whether there exist such polynomials defining a nonsingular hypersurface. In Section \ref{section:hyper}, we show that our example gives also a positive answer to this question.


\section{The cone of sums of squares}
\label{section:sos}

We set some notation and recall basic results that will be used in our construction (see \cite[Chapter 3 and 4]{BPT} for details and proofs).
Let $H_{n,d}$ be the vector space of homogeneous polynomials in $n$ variables of degree $d$ with coefficients in $\R$. Let $\Sigma_{n,2d} \subset H_{n,2d}$ be the cone of polynomials that can be written as a sum of squares of polynomials in $H_{n,d}$ and let $\partial \Sigma_{n,2d}$ be its boundary.

The dual cone $\Sigma_{n,2d}^{*}$ of $\Sigma_{n,2d}$ is the set of all linear functionals in the dual space $H_{n,2d}^*$ that are nonnegative on $\Sigma_{n,2d}$.
A polynomial $f \in \Sigma_{n,2d}$ is in $\partial \Sigma_{n,2d}$ if and only if there exists $\ell \in \Sigma_{n,2d}^{*}$ such that $\ell(f) = 0$.
For an element $\ell \in H_{n, 2d}^{*}$, we denote $B_\ell$ the associated bilinear form $B_\ell(p, q) = \ell(pq)$ for $p, q \in H_{n, d}$. $B_\ell$ is positive semidefinite if and only if $\ell \in \Sigma_{n, 2d}^{*}$. We can also associate to $\ell$ a quadratic form $Q_{\ell}(p) = \ell(p^2)$. If $f = p_1^2 + \dots + p_s^2$ is a sum of squares and $\ell \in \Sigma_{n,2d}^{*}$, then  $\ell(f) = 0$ implies $Q_{\ell}(p_i) = 0$ for all $1 \le i \le s$. For computations, it is convenient to fix a monomial basis $\MM$ of $H_{n,d}$ and represent $Q_{\ell}$ in the coordinates of $\MM$ as a matrix $\Qmat_{\ell} \in \R^{N \times N}$, where $N = \dim H_{n,d}$. Since $B_\ell$ is positive semidefinite, $Q_\ell(p) = 0$ implies $(p)_{\MM} \in \ker(\Qmat_{\ell})$. 
Moreover $p \in \ker(Q_{\ell})$ implies $B_{\ell}(p, q) = 0$ for all $q \in H_{n,d}$ (see \cite[Lemma 2.6]{blekherman}).

Let $m$ be the vector of all monomials in $\MM$. A polynomial $f \in H_{n,2d}$ is a sum of squares if and only if there is a positive semidefinite matrix $A \in \R^{N \times N}$ such that $f = m^T A m$. We call any such matrix a Gram matrix of $f$. If $f$ is in the boundary of the SOS cone then any Gram matrix of $f$ will be singular, and conversely, if $f$ is in the interior of the SOS cone then there exists an invertible Gram matrix for $f$.

By the following result, we know that any  strictly positive polynomial over $\Q$ which is $\R$-sos but not $\Q$-sos must lie in the boundary of the SOS cone.
\begin{theorem}[Theorem 1.2, \cite{hillar}] Let $f \in \Q[x_1, \dots, x_n]$. If there is an invertible Gram matrix for
$f$ with real entries, then there is a Gram matrix for $f$ with rational entries.
\end{theorem}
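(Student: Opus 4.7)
The plan is to exploit two facts: the set of Gram matrices of $f$ is an affine subspace defined over $\Q$, and positive definiteness is an open condition. Combined with density of rational points in rational affine subspaces, these together let us perturb the given real Gram matrix to a nearby rational one, which will still be positive definite.

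First, I would identify the set of all real symmetric matrices $A \in \R^{N \times N}$ satisfying $m^T A m = f$ as an affine subspace $V$. Expanding $m^T A m$ and matching the coefficient of each monomial of degree $2d$ with that of $f$ yields a linear system in the entries of $A$. Since $f \in \Q[x_1,\dots,x_n]$ and the products $m_i m_j$ of basis monomials have rational coefficients, the system has rational data, so $V$ is defined over $\Q$. By standard linear algebra the rank of the system is the same over $\Q$ as over $\R$, so $V(\Q) := V \cap \Q^{N \times N}$ has the same dimension as $V$ and is Euclidean-dense in it.

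Second, I would use that the set $U$ of positive definite real symmetric matrices is open in the space of symmetric matrices. The hypothesis provides a matrix $A_0 \in V$ that is positive semidefinite (since it is a Gram matrix in the sense defined above) and invertible, hence positive definite, so $A_0 \in V \cap U$. Therefore $V \cap U$ is a nonempty open subset of $V$, and by the density established in the previous step it contains a rational point $A \in V(\Q)$. Such an $A$ is rational, positive definite, and satisfies $m^T A m = f$, so it is the desired rational Gram matrix.

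The only step requiring real care is the density of $V(\Q)$ in $V$, which I expect to be the main point to articulate cleanly; everything else is either definitional (invertible PSD equals PD) or standard (openness of the PD cone). The density itself is a routine consequence of rank invariance of a rational linear system under extension to $\R$, so I do not anticipate any substantive obstacle beyond writing this argument down precisely.
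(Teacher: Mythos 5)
Your proof is correct. The paper itself gives no proof of this statement (it is quoted directly from \cite{hillar}), and your argument --- the real symmetric solutions of $m^TAm=f$ form an affine subspace cut out by linear equations with rational data, so by rank invariance under field extension it has a rational point and its rational points are dense, while the given invertible PSD Gram matrix is positive definite and positive definiteness is open, so a nearby rational solution is still a Gram matrix --- is precisely the standard argument behind Hillar's theorem, so there is nothing to add beyond writing the density step carefully, as you indicate.
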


Constructions for strictly positive polynomials in the boundary of the SOS cone have been found by several authors. In \cite{blekherman}, G. Blekherman gives a general construction for forms of degree 6 in 3 variables and degree 4 in 4 variables. In \cite{BSV}, the authors extend the latter construction to more general varieties and in \cite{valdettaro}, the authors use those ideas to build concrete examples for polynomials of any degree and number of variables.

\section{Main construction}
\label{section:main}

In our construction we glue together two different polynomials. The first polynomial is a polynomial with rational coefficients that can be decomposed as a sum of squares of polynomial over $\R$ but not over $\Q$, which has real zeroes (that is, it is not strictly positive). The second polynomial is a strictly positive polynomial in the boundary of the SOS cone.

\subsection{A polynomial with no rational decomposition}
\label{section:block1}

The example we use as one of the blocks was obtained in \cite{CLS} by J. Capco, S. Laplagne and C. Scheiderer, simplifying the construction in \cite{laplagne}.
We recall the example here. It is a sum of squares of 3 polynomials in 4 variables with coefficients in $\Q(\sqrt[3]{2})$. Noting $\alpha$ the real root of $X^3 - 2$, the 3 polynomials are
\begin{align*}
p_1 &= (-4\alpha^2+4\alpha-2)x_0^2+x_1^2-2x_1x_2+2x_2x_3-2x_3^2, \\
p_2 &= (-4\alpha^2-4\alpha+6)x_0^2+x_1^2+2x_1x_2+2x_2x_3+2x_3^2, \\
p_3 &= 4\alpha x_0x_1+4x_0x_2+4\alpha^2 x_0x_3,
\end{align*}
%
%
%
%
%
%
and the sum of squares is
\begin{dmath*}
f = p_1^2 + p_2^2 + p_3^2 =40x_0^4+8x_0^2x_1^2+32x_0^2x_1x_2+64x_0^2x_1x_3+16x_0^2x_2^2+16x_0^2x_2x_3+32x_0^2x_3^2+2x_1^4+8x_1^2x_2^2+8x_1^2x_2x_3+16x_1x_2x_3^2+8x_2^2x_3^2+8x_3^4,
\end{dmath*}
a polynomial of degree 4 with rational coefficients.

This polynomial is not strictly positive, the real roots are the common real roots of $p_1, p_2, p_3$, with approximate affine representatives
$$r_1 = (1, -1.817, 0.452, 1.158) \text{ and } r_2 = (0, 0, 1, 0)$$
(the coordinates of $r_1$ are in an algebraic extension of $\Q$ of degree 12). For our construction, it is only important that there are no real roots with $x_2 = 0$, which can be verified by hand.

We will now prove that the polynomial $f$ does not allow a rational decomposition. The strategy used here is different from the original proof by the authors of \cite{CLS} and we will use this new proof to extend the example to our main problem.

\begin{proposition}\label{prop:block1}
Let $f \in \Q[x_0, x_1, x_2, x_3]$ be as above, then $f$ cannot be decomposed as a sum of squares with rational coefficients.
\end{proposition}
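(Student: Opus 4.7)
The plan is to identify a real subspace $W \subset H_{4,2}(\R)$ with two properties: (a) every summand of every $\R$-sos decomposition of $f$ lies in $W$, and (b) $W \cap H_{4,2}(\Q) = \{0\}$. Together these rule out any $\Q$-sos decomposition: such a decomposition would produce rational summands in $W$, hence all zero, contradicting $f \neq 0$. The natural candidate is $W := \mathrm{span}_\R\{p_1, p_2, p_3\}$.

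Property (b) I would verify by direct linear algebra in $\Q(\alpha)$ with $\alpha^3 = 2$. For a real combination $q = a p_1 + b p_2 + c p_3$, the $x_0 x_1$-coefficient of $q$ is $4\alpha c$, which lies in $\Q$ only when $c = 0$; the $x_1^2$- and $x_1 x_2$-coefficients are $a+b$ and $2(b-a)$, forcing $a, b \in \Q$; and the $x_0^2$-coefficient then becomes $-4\alpha^2(a+b) + 4\alpha(a-b) + (6b-2a)$, which is rational only if $a+b = a-b = 0$ by the $\Q$-linear independence of $1, \alpha, \alpha^2$. Hence $a = b = c = 0$.

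For property (a), the plan is to invoke the duality summarized in Section~\ref{section:sos}: for any PSD linear functional $\ell$ with $\ell(f) = 0$, every summand $q_i$ of every $\R$-sos decomposition lies in $\ker B_\ell$. The point evaluations $\ell_{r_1}$ and $\ell_{r_2}$ (PSD because they return values of squares at real points, and vanishing on $f$ because $f(r_j) = 0$) already confine the $q_i$ to the $8$-dimensional subspace $V := \{q \in H_{4,2}(\R) : q(r_1) = q(r_2) = 0\}$, and simultaneously certify $f \in \partial \Sigma_{4,4}$. To descend from $V$ down to the $3$-dimensional $W$, additional PSD functionals annihilating $f$ must be produced. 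A natural source is the singular structure at $r_2$, since a quick check shows all first partial derivatives of $f$ vanish at $r_2 = (0,0,1,0)$, and in the affine chart $x_2 = 1$ the degree-$2$ part of $f$ at $r_2$ is the positive definite form $16 x_0^2 + 8 x_1^2 + 8 x_3^2$; this Hessian data yields PSD functionals of the shape $\ell(g) = \sum_{i,j} c_{ij}\,\partial_i \partial_j g(r_2)$ with $(c_{ij})$ positive semidefinite, each of which further refines the kernel intersection.

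The decisive obstacle is the last step: exhibiting an explicit, finite collection of such PSD functionals so that the intersection of the kernels $\ker B_\ell$ inside $V$ equals precisely $W$. Equivalently, one must identify the minimal face of $\Sigma_{4,4}$ containing $f$ in its relative interior with the subcone $\Sigma_W := \{\sum h_i^2 : h_i \in W\}$. Once this face identification is carried out, property (a) follows immediately from the duality, and combined with property (b) it yields the proposition. Because the argument goes through PSD functionals and a face analysis rather than through explicit rational SOS computations or Galois-theoretic tricks on $p_1, p_2, p_3$ directly, it is also structurally adapted to the strictly positive setting treated in the next section, where real zeros are unavailable and the boundary certificates must come from the other block of the construction.
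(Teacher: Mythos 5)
The decisive step of your plan --- property (a), that every summand of every $\R$-sos decomposition of $f$ lies in $W=\mathrm{span}_{\R}\{p_1,p_2,p_3\}$ --- is exactly the step you leave open, and the route you propose for it cannot work. Any $\ell\in\Sigma_{4,4}^{*}$ with $\ell(f)=0$ has $\ker B_\ell$ containing \emph{every} possible summand, so a family of such functionals can only confine the summands to the intersection of their kernels; and that intersection is strictly larger than $W$. The set of admissible $\ell$ is convex, so the kernel of a maximum-rank admissible element is contained in the kernel of every admissible one, and the paper exhibits (via an SDP computation reconstructed exactly over $\Q(\alpha,\beta)$, with $\beta$ a root of $X^2+\alpha^2X+(1-\alpha^2)$) a maximal-rank certificate $\Qmat_{\bx}$ of rank $4$ whose kernel is $6$-dimensional and contains, e.g., $u_1=x_0x_2-\alpha\beta\,x_0x_3\notin W$. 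Hence no collection of PSD functionals vanishing on $f$ can cut the summands down to $W$; the best attainable by duality is this $6$-dimensional space. Your auxiliary tools also fall short: the ``Hessian functionals'' $g\mapsto\sum_{i,j}c_{ij}\partial_i\partial_j g(r_2)$ are not elements of $\Sigma_{4,4}^{*}$ (applied to $p^2$ they produce the extra term $2p(r_2)\sum c_{ij}\partial_i\partial_j p(r_2)$, which can be negative), and the legitimate second-order argument at $r_2$ yields nothing beyond vanishing there: since the quadratic part of $f$ at $r_2$ in the chart $x_2=1$ is the positive definite form $16x_0^2+8x_1^2+8x_3^2$, the linear parts of the summands at $r_2$ are unconstrained. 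So the point evaluations at $r_1,r_2$ only get you to the $8$-dimensional space $V$, and nothing in the proposal bridges the gap from $V$ to $W$; it is not even established (nor needed) that (a) holds, i.e.\ that the maximal Gram rank of $f$ is $3$.

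The paper's proof accepts this situation: it constructs the explicit certificate $\Qmat_{\bx}$, takes its $6$-dimensional kernel $\langle u_1,\dots,u_6\rangle$, and proves the analogue of your property (b) for \emph{that} space --- no nontrivial real combination of $u_1,\dots,u_6$ is rational --- crucially using that $\beta$ has degree $6$ over $\Q$ and $\beta\notin\Q(\alpha)$. Your verification of (b) for $W$ is essentially fine (though to force $c=0$ you need the $x_0x_2$-coefficient $4c$ together with the $x_0x_1$-coefficient $4\alpha c$), but it is carried out for the wrong space: the space to which summands can actually be confined is the $6$-dimensional one, and there the rationality obstruction genuinely requires the element $\beta$ produced by the SDP step, not just $\alpha$. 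To repair the argument you would either have to produce such a certificate yourself (essentially redoing the paper's computation) or find a non-dual argument pinning the Gram spectrahedron of $f$ to rank at most $3$; as written, the proposal does not prove the proposition.
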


See \cite[Worksheet A]{nonRatWorksheets} for the computations in Maple \cite{maple}. To prove the claim, we study first which polynomials can appear in a sum of squares decomposition of $f$. We seek to compute a linear form $\ell \in \Sigma_{4,4}^*$ such that $\ell(f) = 0$. Since $\ell$ is required to be non-negative on squares, the associated quadratic form
$$Q_\ell : \R[x_0, x_1, x_2, x_3]_2 \rightarrow \R$$
must be positive semidefinite and, es explained in Section \ref{section:sos}, any polynomial in a sum of squares decomposition of $f$ must be in the kernel of $Q_\ell$.

If we can construct a form as above with no non-trivial rational polynomials in the kernel, then we can assure that $f$ has no rational decomposition. To achieve this, we will construct a form with kernel as small as possible.

We follow a similar strategy as in \cite[Propositions 5.2 and 5.8]{valdettaro}. The form $\ell$ must satisfy $\ell(p_i q) = 0$ for any $q \in H_{4,2}$. It is defined over $H_{4,4}$, which is a 35-dimensional space. The dimension of $H_{4,2}$ is 10, so for each $p_i$ we get 10 restrictions for $\ell$ . This would be a total number of $30$ restrictions, but noting that $p_i p_j = p_j p_i$, we are imposing at most $10 + 9 + 8 = 27$ restrictions. Doing the computations in Maple we obtain that the space $\mathcal{E}$ of linear functionals satisfying all the above conditions is indeed a $35-27 = 8$-dimensional space.

We fix a monomial basis
$$\MM_{\bx} = \{x_0^2, x_0x_1, x_0x_2, x_0x_3, x_1^2, x_1x_2, x_1x_3, x_2^2, x_2x_3, x_3^2\}$$
for the 10-dimensional space $H_{4,2}$ and a basis $\{\ell_1, \dots, \ell_8\}$ for $\mathcal{E}$, and we write a general element of $\mathcal{E}$ as $\ell = t_1 \ell_1 + \dots + t_8 \ell_8$, $t_i \in \R$, $1 \le i \le 8$. Let $\Qmat(\bt) = \Qmat(t_1, \dots, t_8)$, be the matrix associated to the quadratic form $Q_\ell$ for the monomial basis $\MM_{\bx}$ (the entries of $\Qmat(\bt)$ are linear forms in $\{t_1, \dots, t_8\}$). We compute values of $t_1, \dots, t_8$ so that the resulting quadratic form is positive semidefinite and has kernel of low dimension. To achieve this, we compute a numerical approximation using SEDUMI \cite{SEDUMI} and deduce from it exact values. SEDUMI uses interior point methods, so in general the solution will be a solution of maximum rank. The solution found has rank 4 and kernel of dimension 6. Inspecting the numerical solution, we fix the following values:
$$
t_1 := 0, t_4 := 0, t_7 := 0, t_5 := 1, t_8 := 1, t_3 := 1/2.
$$

After fixing these values we can again check with SEDUMI that there exist values for the remaining two variables such that the resulting quadratic form if positive semidefinite.
Let $\tilde \Qmat$ be the new approximation obtained with SEDUMI. In this case, it is not easy to deduce exact values for the two remaining unknowns from the approximations given by SEDUMI. Computing the rank of the principal submatrices of $\tilde \Qmat$, we observe that $\tilde \Qmat_{[2,3] \times [2,3]}$ has rank 1 and $\tilde \Qmat_{[5,6,7] \times [5,6,7]}$ has rank 2.
Hence we force the determinants of the submatrices $\Qmat(\bt)_{[2,3] \times [2,3]}$ and $\Qmat(\bt)_{[5,6,7] \times [5,6,7]}$ to be 0.
Solving in Maple the equations we obtain
$$t_6 := \beta, t_2 := -\frac12 \alpha -\frac12 \beta,$$
where $\alpha$ is a root of $X^3 - 2$ as before and $\beta$ is a root of $X^2 + \alpha^2 X + (1 - \alpha^2)$ (and we can verify $\beta \not\in \Q(\alpha)$).

Applying these substitutions we obtain a matrix $\Qmat_{\bx} \in \Q(\alpha, \beta)^{10 \times 10}$ and we can verify by symbolic computations that this matrix is positive semidefinite. Its kernel has dimension 6, so every polynomial in a sum of squares decomposition of $f$ is a linear combination of 6 fixed polynomials.

We can group the generators of the kernel of $\Qmat_{\bx}$ in two sets supported in complementary sets of monomials. The first set is
$$
\arraycolsep=1.4pt
\begin{array}{r*{10}{l}}
u_1 &= (0, &0, &1, &-\alpha \beta, &0, &0, &0, &0, &0, &0)_{\MM_{\bx}}, \\
u_2 &= (0, &1, &0, &\alpha + \beta, &0, &0, &0, &0, &0, &0)_{\MM_{\bx}}
\end{array}
$$
(note that $p_3 = (0, 4 \alpha, 4, 4 \alpha^2, 0, 0, 0, 0, 0, 0, 0)_{\MM_{\bx}} = 4 o_1 + 4 \alpha o_2$).

The second set is
$$
\arraycolsep=1.4pt\def\arraystretch{2.5}
\begin{array}{r*{10}{l}}
u_3 &= (-\alpha + 2, &0,&0,&0, &\dfrac{\alpha-4}{4\alpha^2-2}, &0, &0, &0, &0, &1)_{\MM_{\bx}}, \\
u_4 &= (-2\alpha^2 + 1, &0,&0,&0, &\dfrac{1}{2}, &0, &0, &0, &1, &0)_{\MM_{\bx}}, \\
u_5 &= (\alpha^2 + 2\alpha + 2\beta + 2, &0,&0,&0, &\dfrac{\alpha^2-4\alpha}{4\alpha^2-2}, &0, &1, &0, &0, &0)_{\MM_{\bx}}, \\
u_6 &= (-\alpha + 2, &0,&0,&0, &\dfrac{-\alpha+4}{4\alpha^2-2}, &1, &0, &0, &0, &0)_{\MM_{\bx}}.
\end{array}
$$

We now prove that there exists no non-trivial combination of these vectors such that the resulting vector has rational coordinates.

\begin{lemma} With notations as above, let $W = \langle u_1, u_2, u_3, u_4, u_5, u_6 \rangle_{\R} \subset \R^{10}$. Then $W \cap \Q^{10} = \{0\}$.
\end{lemma}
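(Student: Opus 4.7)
The plan is to exploit the fact that the six generators split into two families with disjoint supports in $\MM_{\bx}$: the vectors $u_1, u_2$ live in positions $2, 3, 4$ (the monomials $x_0x_1, x_0x_2, x_0x_3$), while $u_3, u_4, u_5, u_6$ live in positions $1, 5, 6, 7, 9, 10$. Consequently any $w \in W$ decomposes uniquely as $w = w' + w''$ in the two sub-spans, and $w \in \Q^{10}$ if and only if $w' \in \Q^{10}$ and $w'' \in \Q^{10}$, so the two halves can be treated separately. The basic algebraic input used throughout is that $\{1, \alpha, \alpha^2, \beta, \alpha\beta, \alpha^2\beta\}$ is a $\Q$-basis of $\Q(\alpha, \beta)$, which follows from irreducibility of $X^3 - 2$ together with the fact noted in the text that $\beta \notin \Q(\alpha)$.

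For the first family, write $w' = a u_1 + b u_2$ with $a, b \in \R$. Positions $3$ and $2$ of $w'$ are $a$ and $b$, so rationality of $w'$ forces $a, b \in \Q$ at once. Position $4$ then equals $b(\alpha + \beta) - a\alpha\beta$; expanding in the basis above, its $\alpha\beta$-coefficient is $-a$ and its $\beta$-coefficient is $b$, so membership in $\Q$ forces $a = b = 0$.

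For the second family, write $w'' = c_3 u_3 + c_4 u_4 + c_5 u_5 + c_6 u_6$ with $c_i \in \R$. Reading positions $10, 9, 7, 6$ forces $c_3, c_4, c_5, c_6 \in \Q$. Expanding position $1$ in the $\Q$-basis and requiring rationality kills the $\beta$-, $\alpha^2$- and $\alpha$-components, which yield $2c_5 = 0$, $-2c_4 + c_5 = 0$ and $-c_3 + 2c_5 - c_6 = 0$, so $c_4 = c_5 = 0$ and $c_3 = -c_6$. Substituting these relations into position $5$ collapses it to $c_6 (4 - \alpha)/(2\alpha^2 - 1)$. This cannot be rational: a relation $q(2\alpha^2 - 1) = 4 - \alpha$ with $q \in \Q$ would, on matching the coefficient of $\alpha$, require $1 = 0$, contradicting $\Q$-linear independence of $\{1, \alpha, \alpha^2\}$. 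Hence $c_6 = 0$ and $c_3 = 0$, so $w'' = 0$ and $w = 0$. The only real difficulty is bookkeeping, namely choosing the order in which to extract information (singleton-monomial coordinates first, then the shared positions $1$ and $5$ that mix the generators) so that each step isolates a clean constraint.
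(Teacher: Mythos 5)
Your proof is correct and takes essentially the same route as the paper: split $W$ along the complementary supports of $\{u_1,u_2\}$ and $\{u_3,\dots,u_6\}$, use the singleton coordinates ($2,3$ and $6,7,9,10$) to force rationality of the coefficients, and then extract the same constraints from coordinates $1$, $4$ and $5$ ($c_5=0$, $c_4=0$, $c_3=-c_6$, and finally the irrationality of $(4-\alpha)/(2\alpha^2-1)$). The only cosmetic difference is in the $u_1,u_2$ block, where you expand coordinate $4$ in the $\Q$-basis $\{1,\alpha,\alpha^2,\beta,\alpha\beta,\alpha^2\beta\}$ of $\Q(\alpha,\beta)$ while the paper solves for $\beta$ and compares degrees; both rest on the same fact that $\beta\notin\Q(\alpha)$ and $[\Q(\alpha,\beta):\Q]=6$.
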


\begin{proof} Since $\{u_1, u_2\}$ and $\{u_3, \dots, u_6\}$ have complementary non-zero coordinates, we analyze separately the spaces generated by each of these  sets.

We show first that there is no non-trivial linear combination of $u_1$ and $u_2$ with rational entries. If $c_1 u_1 + c_2 u_2$ has rational coordinates, by looking at the second and third coordinates, we immediately see that $c_1, c_2 \in \Q$. Suppose now $c_1 (-\alpha \beta) + c_2 (\alpha + \beta) = q \in \Q$. Then
$$
\beta = \frac{q - c_2 \alpha}{c_2 - c_1 \alpha},
$$
but $\beta$ has degree 6 over $\Q$ and the right  hand size is in $\Q(\alpha)$, which is an extension of $\Q$ of degree 3 (note that the denominator does not vanish because $c_1,c_2$ cannot be both zero). Hence, $\{c_1 u_1 + c_2 u_2\} \cap \Q^{10} = \{0\}$.

We show now that there is no non-trivial linear combination
$$c_3 u_3 + c_4 u_4 + c_5 u_5 + c_6 u_6$$
such that the resulting vector has rational coordinates. Otherwise, looking at the 6th, 7th, 9th and 10th coordinates, we see that $c_i \in \Q$, $3 \le i \le 6$. If we look at the first coordinate, $\beta$ only appears in $u_5$ and $\beta \not\in \Q(\alpha)$, hence $c_5 = 0$. Similarly $c_4 = 0$ because after setting $c_5 = 0$, $\alpha^2$ only appears in $u_4$ and $\{\alpha, \alpha^2\}$ are linearly independent over $\Q$. We now look at $c_3 u_3 + c_6 u_6$. To cancel $\alpha$ in the first coordinate, it must be $c_6 = -c_3$, but this gives a non-rational 5th coordinate in $c_3 u_3 -c_3 u_6 = c_3 \frac{\alpha-4}{2\alpha^2-1}$. Hence, $\{c_3 u_3 + \dots + c_6 u_6\} \cap \Q^{10} = \{0\}$.
\end{proof}

This finishes the proof of Proposition \ref{prop:block1}.

\begin{remark}
We observe that the $8$th coordinate (corresponding to $x_2^2$) is zero in all the 6 vectors in the kernel. That is, $x_2^2$ cannot appear in any polynomial in an SOS decomposition of $f$, which is clear since $x_2^4$ is not a monomial in $f$.
\end{remark}

\subsection{A strictly positive polynomial in the boundary of the SOS cone}
\label{section:g}
The second polynomial that we will use as a building block is an example of a strictly positive polynomial in the boundary of $\Sigma_{4,4}$. The concrete example we use was obtained in \cite{capco}, following the construction in \cite{blekherman}.

We take
\begin{align*}
q_1 &= y_0^2-y_3^2, \\
q_2 &= y_1^2-y_3^2, \\
q_3 &= y_2^2-y_3^2, \\
q_4 &= -y_0^2 - y_0y_1 -y_0y_2 + y_0y_3 - y_1y_2 + y_1y_3 + y_2y_3,
\end{align*}
and $g = q_1^2 + q_2^2 + q_3^2 + q_4^2$.
Following \cite{capco}, we know that the SOS decomposition of $g$ is unique up to orthogonal transformations. In particular, there is a form $\ell \in \Sigma_{4,4}^*$ such that the kernel of the associated quadratic form $Q_{\ell}$ is $\langle q_1, q_2, q_3, q_4 \rangle$. For the monomial basis
$$\MM_{\by} = \{y_0^2, y_0y_1, y_0y_2, y_0y_3, y_1^2, y_1y_2, y_1y_3, y_2^2, y_2y_3, y_3^2\},$$ the matrix associated to $Q_{\ell}$ (see also \cite{laplagneUMA}) is
$$
\Qmat_{\by} = \begin{pmatrix}
6 & -1 & -1 & 1 & 6 & -1 & 1 & 6 & 1 & 6 \\
-1 & 6 & -1 & 1 & -1 & -1 & 1 & -1 & 1 & -1 \\
-1 & -1 & 6 & 1 & -1 & -1 & 1 & -1 & 1 & -1 \\
1 & 1 & 1 & 6 & 1 & 1 & -1 & 1 & -1 & 1 \\
6 & -1 & -1 & 1 & 6 & -1 & 1 & 6 & 1 & 6  \\
-1 & -1 & -1 & 1 & -1 & 6 & 1 & -1 & 1 & -1 \\
1 & 1 & 1 & -1 & 1 & 1 & 6 & 1 & -1 & 1 \\
6 & -1 & -1 & 1 & 6 & -1 & 1 & 6 & 1 & 6 \\
1 & 1 & 1 & -1 & 1 & 1 & -1 & 1 & 6 & 1 \\
6 & -1 & -1 & 1 & 6 & -1 & 1 & 6 & 1 & 6
\end{pmatrix}.
$$

\subsection{Combination of the two examples.}

We define
$$r = x_2^2 - y_2^2$$  and $h(\bx, \by) = f + g + r^2$.

\begin{theorem}
The polynomial
$$h(x_0, x_1, x_2, x_3, y_0, y_1, y_2, y_3) = f(x_0, x_1, x_2, x_3) + g(y_0, y_1, y_2, y_3) + r(x_2, y_2)^2$$
is a strictly positive polynomial with rational coefficients that can be decomposed as the sum of squares of polynomials in $\Q(\sqrt[3]{2})[\bx, \by]$ but cannot be decomposed as the sum of squares of polynomials in $\Q[\bx, \by]$.
\end{theorem}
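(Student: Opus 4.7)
The claim splits into four parts; three are immediate, and one is the heart of the theorem. Rationality of the coefficients is clear because $f$, $g$, $r^2$ all have rational coefficients. The $\Q(\alpha)$-sos representation is given directly by
\[ h \;=\; p_1^2 + p_2^2 + p_3^2 + q_1^2 + q_2^2 + q_3^2 + q_4^2 + r^2. \]
For strict positivity, suppose $(a,b) \in \R^8$ satisfies $h(a,b)=0$. All three summands of $h$ are non-negative, so $f(a)=g(b)=r(a_2,b_2)=0$. Since $g$ is strictly positive (Section \ref{section:g}), $b=0$; then $r(a_2,0) = a_2^2 = 0$ gives $a_2=0$; and $f$ would then have a non-trivial real zero with $x_2=0$, contradicting the observation in Section \ref{section:block1}.

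The non-trivial task is proving that $h$ is not $\Q$-sos. I extend the dual-cone approach of Proposition \ref{prop:block1} by constructing a linear form $\ell \in \Sigma_{8,4}^{*}$ with $\ell(h)=0$ whose associated quadratic form $Q_\ell$ satisfies
\[ \ker Q_\ell \cap \Q[\bx,\by]_2 \;=\; \langle q_1, q_2, q_3, q_4, r \rangle_{\Q}. \]
Decompose $H_{8,4}=\bigoplus_{i+j=4} H^{(i,j)}$ by bidegree in $(\bx,\by)$ and define $\ell$ piecewise: on $H^{(4,0)}$ take $\ell_f$ (rescaled so that $\ell_f(x_2^4)=\ell_g(y_2^4)$), on $H^{(0,4)}$ take $\ell_g$, set $\ell \equiv 0$ on $H^{(3,1)} \oplus H^{(1,3)}$, and on $H^{(2,2)}$ use the rank-one rule
\[ \ell(x_i x_j\, y_k y_l) \;=\; \tfrac{1}{\ell_f(x_2^4)}\, \ell_f(x_2^2 x_i x_j)\, \ell_g(y_2^2 y_k y_l). \]
A direct calculation gives $\ell(r^2)=0$, whence $\ell(h)=0$, and $p_i, q_j, r \in \ker Q_\ell$ hold by construction.

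The Gram matrix $\Qmat$ of $\ell$ splits as a pure-bidegree block coupling $\Qmat_\bx$ and $\Qmat_\by$ via a rank-one cross term, together with a mixed-mixed block indexed by monomials $x_i y_j$ which, under the rank-one rule, is a Kronecker product of a principal submatrix of $\Qmat_\bx$ with one of $\Qmat_\by$. The mixed block is PSD as a Kronecker product of PSD matrices; the pure block is PSD by combining Cauchy-Schwarz for the bilinear forms attached to $\Qmat_\bx$ and $\Qmat_\by$ with AM-GM. Writing any kernel element as $p^{(2,0)}+p^{(1,1)}+p^{(0,2)}$ and chasing the equality conditions in that Cauchy-Schwarz/AM-GM argument identifies the kernel as $\langle u_1, \dots, u_6, q_1, \dots, q_4, r \rangle_\R$ (assuming the auxiliary $4 \times 4$ matrix arising from $\ell_f$ is invertible, which is checked explicitly). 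The pure-$\bx$ part of any rational kernel element then has the form $\lambda x_2^2+\sum c_i u_i$; the lemma from Section \ref{section:block1}, applied as in its original proof, forces $c_i=0$, giving the displayed rational kernel.

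Granted such an $\ell$, if $h=\sum_i s_i^2$ with $s_i \in \Q[\bx,\by]_2$, then each $s_i$ lies in the rational kernel, so $s_i = \sum_j a_{ij} q_j + b_i r$ with rational $a_{ij}, b_i$. Projecting the identity $h=\sum s_i^2$ onto bidegree $(4,0)$ yields
\[ f(\bx) + x_2^4 \;=\; h^{(4,0)} \;=\; \Big(\sum_i b_i^2\Big)\, x_2^4, \]
which is impossible because $f$ has monomials other than $x_2^4$. The main obstacle is the construction of $\ell$, and in particular the PSD verification together with the precise description of $\ker Q_\ell$; I expect this step to require explicit computations analogous to the Maple worksheets cited in Proposition \ref{prop:block1}.
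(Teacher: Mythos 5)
Your argument is correct in substance, but for the one hard part (non-existence of a $\Q$-sos decomposition) it takes a genuinely different route from the paper. The paper does not construct $\ell$ in closed form: it imposes $\ell(pw)=0$ for $p\in\{p_1,p_2,p_3,q_1,\dots,q_4,r\}$ and $w\in H_{8,2}$ plus the requirement that $Q_\ell$ restrict to $\Qmat_{\bx}$ and $\Qmat_{\by}$ on the pure variables, obtains a $62$-parameter affine family, sets all parameters to zero, verifies positive semidefiniteness of the resulting $36\times 36$ matrix symbolically in Maple, and finds a $14$-dimensional kernel (your eleven generators plus the mixed polynomials $s_1=x_0y_0+x_0y_1$, $s_2=x_1y_0+x_1y_1$, $s_3=x_3y_0+x_3y_1$); the contradiction is then read off the coefficient of $x_0^4$, essentially the same endgame as your bidegree-$(4,0)$ projection. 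You instead define $\ell$ explicitly by a rank-one tensor gluing of $\ell_f$ and $\ell_g$ on the bidegree-$(2,2)$ monomials and zero on odd bidegrees; since you prescribe a genuine linear functional on monomials, the Hankel-type consistency that prevents one from naively gluing the two Gram blocks is automatic, and your PSD argument checks out: the moment matrix splits into a pure block and a mixed block, the mixed block is $\frac{1}{\ell_f(x_2^4)}(A\otimes B)$ with $A_{ac}=B_{\ell_f}(x_2x_a,x_2x_c)$ and $B_{bd}=B_{\ell_g}(y_2y_b,y_2y_d)$ both PSD, and Cauchy--Schwarz plus AM--GM handles the rank-one coupling of $\Qmat_{\bx}$ and $\Qmat_{\by}$, with the equality analysis giving exactly $\langle u_1,\dots,u_6,q_1,\dots,q_4,r\rangle_{\R}$ as kernel. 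What each approach buys: the paper's is short to state but rests on a semi-numerically found matrix certified by a Maple computation, while yours reuses only already-certified data ($\ell_f$, $\ell_g$, and the lemma that $\langle u_1,\dots,u_6\rangle\cap\Q^{10}=\{0\}$) and yields a conceptual, largely computation-free certificate, at the price of the equality-case bookkeeping that you only sketch. Three details you should make explicit: (i) $\ell_f(x_2^4)>0$, which is needed for the rescaling and the division in your rank-one rule, and which follows from the paper's remark that $x_2^2$ does not occur in $u_1,\dots,u_6$; (ii) there are two auxiliary $4\times 4$ matrices, $A$ from $\ell_f$ and $B$ from $\ell_g$, and both are invertible because no nonzero element of $\ker\Qmat_{\bx}$ (resp.\ $\ker\Qmat_{\by}$) is supported on the monomials $x_2x_a$ (resp.\ $y_2y_b$) -- you mention only the one from $\ell_f$, although even a nontrivial mixed kernel would not damage your final projection argument; (iii) when you pass from a rational kernel element to vanishing $u_i$-coefficients, you use that all $u_i$ have zero $x_2^2$-coordinate, so the rational multiple of $x_2^2$ coming from $r$ can be split off before invoking the lemma.
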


\begin{proof}
We first verify that $h$ is strictly positive. If $g(\bx, \by) = 0$, by Section \ref{section:g}, it must be $y_i = 0$ for $0 \le i \le 3$. Now $r(x_2, y_2) = 0$ and hence $x_2 = 0$. But $f$ has no non  trivial root with $x_2 = 0$.

We now prove that $h$ does not allow a rational decomposition. Our strategy will be to construct a form $\ell \in \Sigma_{8,4}^*$ that vanishes in $h$ and such that the kernel of the associated quadratic form contains no non-trivial polynomial with rational coefficients in the $\bx$-variables.

We proceed in a similar way as in Section \ref{section:block1}. The linear form $\ell$ must satisfy $\ell(p w) = 0$ for all $p \in \{p_1, p_2, p_3, q_1, q_2, q_3, q_4, r\}$ and all $w \in H_{8,2}$. In this case, the space of forms satisfying these restrictions has dimension 70. We add as additional restrictions that the associated quadratic form $Q_{\ell}$ restricted to the $\bx$-variables is $Q_{\bx}$ and restricted to the $\by$-variables is $Q_{\by}$.
This reduces the dimension of the affine space of forms to 62.

Setting the monomial basis
$
\MM_{\bxy} = \{x_0^2, x_0x_1, x_0x_2, x_0x_3, x_1^2, x_1x_2, x_1x_3,$ $x_2^2$, $x_2x_3, x_3^2, y_0^2, y_0y_1, y_0y_2, y_0y_3, y_1^2, y_1y_2, y_1y_3, y_2^2$, $y_2y_3$, $y_3^2$, $x_0y_0$, $x_0y_1$, $x_0y_2$, $x_0y_3$, $x_1y_0$, $x_1y_1, x_1y_2, x_1y_3, x_2y_0, x_2y_1, x_2y_2, x_2y_3, x_3y_0, x_3y_1, x_3y_2, x_3y_3\}
$,
we get a matrix $\Qmat$ associated to the quadratic form whose entries are linear affine in 62 variables. Setting the 62 variables to 0, we get
an exact matrix $\Qmat_{\bxy}$ and we can verify in Maple that it is positive semidefinite. The kernel of $\Qmat_{\bxy}$ has dimension 14. Computing generators in Maple, we see that the kernel can be generated by the 6 polynomials $\{u_1, \dots, u_6\}$ generating the kernel of $\Qmat_{\bx}$, plus the polynomials  $q_1, q_2, q_3, q_4, r$ and 3 new polynomials:
\begin{align*}
s_1 &= x_0y_0 + x_0y_1, \\
s_2 &= x_1y_0 + x_1y_1, \\
s_3 &= x_3y_0 + x_3y_1.
\end{align*}

We conclude that every polynomial in an SOS decomposition of $h$ is a linear combination of these 14 polynomials.

Assume now that there is a rational SOS decomposition of $h$ and let $p$ be a polynomial in that decomposition, then $p$ is a linear combination of the 14 generators of the kernel of $\Qmat_{\bxy}$.
We have already seen that there is no non-trivial polynomial with rational coefficients in the kernel of $\Qmat_{\bx}$, and we verify that the monomials appearing in $q_1, q_2, q_3, q_4, r, s_1, s_2, s_3$ are all different from the monomials in the polynomials $u_1, \dots, u_6$, hence the coefficients of $u_1, \dots, u_6$ in $p$ must all be zero. This implies that $p$ has no term $x_0^2$. Since this is true for all polynomials in a rational decomposition of $h$, the monomial $x_0^4$ cannot appear in $h$, a contradiction since $x_0^4$ does occur in $h$.

\end{proof}

\begin{remark}
If we attempt to construct other examples, numerical experiments using SEDUMI suggest that if we replace $r = x_2^2 - y_2^2$ by other polynomials such as $x_2^2 - y_1^2$ or $x_2^2 - 2y_2^2$, the resulting polynomials are also examples of strictly positive polynomials that are $\R$-SOS but not $\Q$-SOS. However, if we use instead $r = x_2^2 + y_2^2$, the resulting polynomial is in the interior of the SOS cone, and hence it is a $\Q$-SOS.
\end{remark}

\section{A nonsingular projective variety}
\label{section:hyper}

In \cite[Section 5.1]{scheiderer}, C. Scheiderer asks if there exists a $\Q$-form which is an $\R$-SOS but not a $\Q$-SOS such that the hypersurface it defines is nonsingular. This implies that the polynomial has no real zeros, because any real zero of a non-negative polynomial is a singular point of the hypersurface, and that the polynomial is irreducible, because the intersection of the hypersurfaces defined by the factors is in the singular locus. Hence, as the author mentions, being nonsingular is a common sharpening of both properties. That is, the motivation for this question is to find an example with as little structure as possible, so that it cannot be detected by a priori methods (e.g. computing the singularities) if the polynomial can be decomposed over $\Q$.

Using Singular \cite{DGPS} (see \cite[Worksheet B]{nonRatWorksheets}), we can verify by a Groebner basis computation that the hypersurface $\{h = 0\}$, where $h$ is the polynomial we defined in the previous section, contains no singular points, that is:
$$
\sqrt{\left\langle f, \frac{\partial f}{\partial{x_0}}, \dots, \frac{\partial f}{\partial{y_3}}\right\rangle} = \langle x_0, x_1, x_2, x_3, y_0, y_1, y_2, y_3 \rangle,
$$
hence giving a positive answer to that question.

\bibliographystyle{plain}
\bibliography{../../rationalSOS}

\end{document}